\newcommand{\lc}{\left<}
\newcommand{\rc}{\right>}
\newcommand{\veps}{\varepsilon}
\newcommand{\al}{\alpha} 
\newcommand{\ze}{\zeta} 
\newcommand{\ga}{\gamma}
\newcommand{\de}{\delta}
\newcommand{\om}{\omega}
\newcommand{\Om}{\Omega}
\newcommand{\cP}{\mathcal{P}}
\newcommand{\cL}{\mathcal{L}}
\newcommand{\cM}{\mathcal{M}}
\newcommand{\cO}{\mathcal{O}}
\newcommand{\bB}{\mathbb{B}}
\newcommand{\bK}{\mathbb{K}}
\newcommand{\bP}{\mathbb{P}}
\newcommand{\bR}{\mathbb{R}}
\newcommand{\bS}{\mathbb{S}}
\newcommand{\bC}{\mathbb{C}}
\newcommand{\bN}{\mathbb{N}}
\newcommand{\vpi}{\varpi}
\newcommand{\ov}[1]{\overline{#1}}
\newcommand{\cali}[1]{\mathscr{#1}}
\newcommand{\Cc}{\cali{C}}
\newtheorem{thm}{Theorem}
\newtheorem{lem}[thm]{Lemma}
\newtheorem{cor}[thm]{Corollary}
\theoremstyle{definition}
\newtheorem{defn}[thm]{Definition}
\newtheorem{remark}[thm]{Remark}
\newtheorem{expl}[thm]{Example}
\numberwithin{thm}{section}
\numberwithin{equation}{section}
\renewcommand{\[}{\begin{equation}}
\renewcommand{\]}{\end{equation}}
\title[Fekete points on uniformly polynomially cuspidal sets]{Convergence speed for Fekete points on uniformly polynomially cuspidal sets}
\author{Hyunsoo Ahn}  \address{Department of Mathematical Sciences, KAIST, 291 Daehak-ro, Yuseong-gu, Daejeon 34141, South Korea} \email{kakapoolove@kaist.ac.kr}
\author{Ngoc Cuong Nguyen}  \address{Department of Mathematical Sciences, KAIST, 291 Daehak-ro, Yuseong-gu, Daejeon 34141, South Korea} \email{cuongnn@kaist.ac.kr}
\begin{document} 


\begin{abstract} We obtain the convergence speed for Fekete points on uniformly polynomially cuspidal compact sets introduced by  Paw\l ucki and Ple\'sniak. This is done by showing that these sets are $(\Cc^\al, \Cc^{\al'})$-regular in the sense of Dinh, Ma and Nguyen.
\end{abstract}

\keywords{Fekete points, equidistribution, uniformly polynomially cuspidal sets}

\maketitle

\section{Introduction}

Let $(X,\om)$ be a compact K\"ahler manifold of dimension $n$. A weighted compact subset $(K,\phi)$ consists of a non-pluripolar compact subset $K$ in $X$ and a real-valued continuous function $\phi$ on $K$.
 Let us denote $PSH(X,\om)$ the space of all $\om$-plurisubharmonic functions on $X$ ($\om$-psh for short). 
The weighted Siciak-Zaharjuta extremal function associated to $(K,\phi)$ is the upper semi-continuous regularization $V_{K,\phi}^*$ of 
$$
	V_{K,\phi}(x) = \sup\{v(x): v \in PSH(X,\om), v\leq \phi \text{ on }K\}.
$$
The normalized Monge-Amp\`ere measure of $V_{K,\phi}^*$ is called the equilibrium measure of $(K,\phi)$, namely,
$$
	\mu_{\rm eq} (K,\phi) := \frac{(\om + dd^c V_{K,\phi}^*)^n}{\int_X\om^n},
$$
where the right hand side used the Bedford-Taylor wedge product for bounded $\om$-psh functions \cite{BT82}. 
The weighted extremal function is inspired by Siciak \cite{Si62, Siciak82} and Zaharjuta \cite{Za76} which plays a crucial role in pluripotential theory on compact K\"ahler manifolds. There is a large literature focusing on the study of this function and its applications. We refer the readers to the books by Klimek \cite{Kl91} for the local setting in $\bC^n$ and by Guedj and Zeriahi \cite{GZ17}  for the global one (see also Dinh-Sibony 
\cite{DS06}), where they contain a comprehensive list of its properties and applications.

Assume now $X$ is projective, $L$ is an ample holomorphic line bundle on  $X$ and $[\om] \in c_1(L)$.  Let $N_q = \dim H^0(X, L^q)$ with $q\geq 0$ integer, a basis $\{s_1,...,s_{N_q}\}$ of $H^0(X,L^q)$ and  $$P = (p_1,...,p_{N_q}) \in K^{N_q}$$
be a configuration of points in the given compact subset $K$. Then, following  \cite{BBW11}  the ordered set of points $P$ is called a Fekete configuration of order $q$ for $(K,\phi)$ if it maximizes the  Vandermonde-type determinant
$$
	|\det (s_i(x_j))| e^{- (\phi(x_1)+\cdots + \phi(x_{N_q}))}
$$
over $(x_1,...,x_{N_q}) \in K^{N_q}$. This condition is independent of the choice of the basis $(s_i)_{i=1}^{N_q}.$ Moreover, for such a Fekete configuration $P$ we call
$$
	\mu_q := \frac{1}{N_q} \sum_{j=1}^{N_q} \de_{p_{j}} 
$$
to be the Fekete measure of order $q$. 

A fundamental result obtained by Berman, Boucksom and Witt-Nystrom \cite{BBW11} is that the convergence
\[\label{eq:weak-c}
	\lim_{q\to \infty} \mu_{q} = \mu_{\rm eq} (K,\phi)
\]
holds in weak topology of measures. 
This is a generalization of the classical result for $n=1$ (see e.g., \cite[Theorem~1.3]{ST97}, \cite{BBL92} and \cite{BSV89}). In particular, it answers affirmatively an open question asked by Siciak \cite[Problem~15.3]{Siciak82} in the local setting (see also \cite[Problem~3.3]{ST97}). There is available a self-contained proof of this local result using only weighted pluripotential theory by  Levenberg \cite{Le10}, which is derived from \cite{BB10} and \cite{BBW11}. Most recently, the expository of Dujardin \cite{Du20} contains motivations, the applications of the result as well as many references. 

We are now interested in the speed of convergence of such sequences in \eqref{eq:weak-c}.
Let $\cM(X)$ be the space of probability measure on $X$. For $\ga>0$ the distance ${\rm dist}_\ga$ between two measures $\mu, \nu$ in $\cM(X)$ is given by
\[\label{eq:dist}	
	{\rm dist}_\ga (\mu, \nu) = \sup_{\|v\|_{\ga} \leq 1} | \lc \mu -\nu, v\rc|,
\]
where $v$ is smooth real-valued function and $\|\cdot \|_{\ga}$ is the $\ga$-H\"older norm on $X$. This is a generalization of ${\rm dist}_1$ for $\ga=1$ the classical Kantorovich-Wasserstein distance. 

The sharp speed of convergence of the Fekete measures is showed firstly by Lev, Ortega-Cerda \cite{LO16} for $\phi$ smoothly strictly $\om$-plurisubharmonic on  $X$. Later,  Dinh, Ma and Nguyen \cite{DMN} obtained the speed of convergence for a very large family of compact sets on $X$ and general H\"older continuous quasi-plurisubharmonic weights $\phi$ in term of the distance \eqref{eq:dist}. The family contains compact domains whose boundary are $C^2$-smooth in $X$. The expectation in \cite[page 562]{DMN} is that the estimate would hold for for all uniformly polynomially cuspidal (UPC) sets introduced by Paw\l ucki and Ple\'sniak  \cite{PP86} (see Section~\ref{sec:upc} and Definition~\ref{defn:UPC-mfd} below).  Our main result is to confirm this expectation.

\begin{thm}\label{thm:main} Let $K\subset X$ be a compact UPC set and $0<\ga\leq 2.$ Let $\phi$ be a $\al$-H\"older continuous function on $K$, where $0<\al \leq 1$. Then, there exist uniform constants $c>0$ and $0<\al'<1$ depending on $K, \al$ and H\"older norm of $\phi$ such that 
$$
	{\rm dist}_{\ga} (\mu_d, \mu_{\rm eq}(K,\phi)) \leq \frac{  c [\log d]^{3\al''}}{d^{\al''}} \quad \forall d> 1, 
$$ 
where $\al'' = \ga \al' / (24 +12\al')$.
\end{thm}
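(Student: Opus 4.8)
The plan is to obtain the estimate as a direct consequence of the general convergence-speed theorem of Dinh--Ma--Nguyen \cite{DMN}, whose only input tied to the set $K$ is the pair of H\"older exponents appearing in their regularity condition; the numerical exponents $24$ and $12$ and the logarithmic power are produced by their general argument. It therefore suffices to prove that a compact UPC set is $(\Cc^\al,\Cc^{\al'})$-regular, i.e. that for every $\al$-H\"older weight $\phi$ the extremal function $V_{K,\phi}^*$ is $\al'$-H\"older on $X$, with $\al'\in(0,1)$ and the H\"older norm controlled in terms of the UPC data of $K$, the exponent $\al$, and $\|\phi\|_{\Cc^\al}$. Feeding this $\al'$ into \cite{DMN} then yields the stated rate with $\al''=\ga\al'/(24+12\al')$ for all $0<\ga\le 2$, so the entire content of the theorem is this regularity.

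To prove the regularity I would first reduce to a local statement: writing $\om=dd^c\rho$ with $\rho$ a smooth local potential, the function $V_{K,\phi}^*+\rho$ is genuinely plurisubharmonic, so its modulus of continuity may be estimated in a fixed coordinate ball meeting $K$, the term $\rho$ contributing only a Lipschitz error. Two standard facts about UPC sets are then available: they are $L$-regular, whence $V_{K,\phi}^*=\phi$ on $K$ and the boundary values inherit the $\al$-H\"older modulus of $\phi$; and on $X\setminus K$ the function is maximal, $(\om+dd^cV_{K,\phi}^*)^n=0$, so that by the comparison principle its modulus of continuity there is propagated from the behaviour along $\partial K$. The problem thus reduces to the one-sided \L ojasiewicz--Siciak bound near $K$: for $x$ with $\de:=\mathrm{dist}(x,K)$ small and nearest point $x_0\in K$,
\[\label{eq:LS}
	V_{K,\phi}^*(x)\le \phi(x_0)+C\,\de^{\,\al'},
\]
the matching lower bound $V_{K,\phi}^*(x)\ge\phi(x_0)-C\de^{\,\al'}$ being the easy direction, obtained by inserting an $\om$-psh competitor built from an $\al$-H\"older extension of $\phi$.

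The heart of the matter, and the step I expect to be the main obstacle, is \eqref{eq:LS}, which is exactly where the uniform polynomial cuspidality enters. Set $u:=V_{K,\phi}^*$, which is $\om$-psh, uniformly bounded, and equal to $\phi$ on $K$. I would take the polynomial cusp $h_{x_0}\colon[0,1]\to K$ attached to $x_0$, of degree $\le d$ with bounded coefficients and $\mathrm{dist}(h_{x_0}(t),X\setminus K)\ge M t^{m}$, translate it to base point $x$, and complexify $t\mapsto\ze\in\bC$, producing a holomorphic disc $H_x(\ze)=x+h_{x_0}(\ze)-x_0$ along which $\psi(\ze):=u(H_x(\ze))$ is subharmonic up to the bounded smooth correction coming from $\om$. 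Since the cusp penetrates to depth $Mt^{m}$ while the translation displaces points only by $\de$, one checks that $H_x(\ze)\in K$ for $\ze$ in a power-cusp region $\{t+is:\ t\in[t_0,r_0],\ |s|\le c\,t^{m}\}$ with $t_0\sim\de^{1/m}$, and there $\psi(\ze)\le\phi(x_0)+C\,t^{\al}$ by the H\"older continuity of $\phi$. Combining this constraint with the universal upper bound on $\psi$ through a two-constants (harmonic-measure) estimate on the disc, and \emph{iterating it over dyadic scales} from $r_0$ down to $t_0\sim\de^{1/m}$, converts the fixed per-scale gain into geometric decay of the oscillation and yields \eqref{eq:LS}. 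The delicate points, which together fix the value of $\al'$ in terms of $\al$, $m$ and $d$, are the correct lower bound for the harmonic measure of a power-cusp region at each scale, the uniformity of all constants over $x_0\in K$ --- guaranteed by the uniformity built into the UPC hypothesis --- and the reassembly, via maximality and the comparison principle, of these chartwise one-sided bounds into the global $\al'$-H\"older continuity required to invoke \cite{DMN}.
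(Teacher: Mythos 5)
Your reduction is the same as the paper's: everything rests on Dinh--Ma--Nguyen's estimate \eqref{eq:speed-dmn}, so the whole content is showing that a compact UPC set is $(\Cc^\al,\Cc^{\al'})$-regular with controlled norms. Where you genuinely diverge is in how you propose to prove that regularity. The paper does not touch harmonic measure at all: it verifies the local $\mu$-HCP of Definition~\ref{defn:local-mu-hcp} by observing that the cusp $E_a$ contains the image of the convex pyramid $S(r')$ under the polynomial map $p(t,z)=h(t)+M(z_1^m,\dots,z_n^m)$, then imports Siciak's HCP estimate for convex bodies (Lemma~\ref{lem:convex}) through the functorial inequality $L_{h(E)}(h(w))\le \deg(h)\cdot L_E(w)$ (Lemma~\ref{lem:L-polynomial}), which immediately yields the exponent $\mu=1/2m$ and order $q=n+1$; Corollary~\ref{cor:regularity} then converts this into $(\Cc^\al,\Cc^{\al'})$-regularity. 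Your route --- complexifying the cusp curve into an analytic disc, running a two-constants/harmonic-measure estimate on a power-cusp (or segment) region, and iterating over dyadic scales --- is a classical alternative and should produce a comparable exponent, since the harmonic measure of the segment $[t_0,r_0]$ seen from the tip behaves like $1-O(\sqrt{t_0/r_0})$ and $t_0\sim\de^{1/m}$ recovers the $\de^{1/2m}$ gain. What the paper's argument buys is that this one-variable potential theory is entirely outsourced to Siciak's lemma for convex sets, so no per-scale iteration, no oscillation-decay bookkeeping, and an explicit, uniform constant in one page.

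Two cautions on your sketch, which you yourself flag as the main obstacle and do not execute. First, the two-dimensional good region $\{t+is:\ |s|\le c\,t^m\}$ is only available when $\bK=\bC$; for a UPC set in $\bR^n$ the complexified curve $H_x(t+is)$ leaves $\bR^n$ for $s\ne 0$, so the set where $\psi\le\phi(x_0)+Ct^\al$ is only the real segment $[t_0,r_0]$. This still suffices (the segment already carries the $\sqrt{t_0/r_0}$ harmonic-measure estimate), but your region as written is wrong in the real case. Second, the naive single-scale two-constants bound leaves an irreducible error $C r_0^{\al}$ from the oscillation of $\phi$ over the whole cusp, so you must either optimize $r_0$ as a power of $\de$ or genuinely carry out the dyadic oscillation-decay iteration with uniform constants over $x_0\in K$; neither is done in the proposal, and this is precisely the work the paper's Lemma~\ref{lem:L-polynomial} plus Lemma~\ref{lem:convex} route avoids.
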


Notice that we can be computed explicitly the constant $\al'$ in terms of $\al$ and the UPC property of $K$ (see Remark~\ref{rem:al'} below).
Originally, the UPC sets in \cite{PP86} related with  studying the H\"older property of Siciak-Zaharjuta extremal function in pluripotential theory. It was pointed out there that this family is very large and nearly optimal for the H\"older property. Every bounded convex domain in $\bR^n$ (or $\bC^n\equiv \bR^{2n}$) or every bounded Lipchitz domain are UPC. This family  is essentially larger than the family of  fat subanalytic sets (these sets are fundamental objects in real algebraic geometry \cite{BM88}). Furthermore, the boundary of UPC sets may exhibit very irregular behavior (see Examples~\ref{expl:not-subanalytic}, \ref{expl:comb} below).  The study of UPC sets has been being very active since the appearance of \cite{PP86}. We refer the reader to a survey by Pl\'esniak \cite{Pl06} for the state-of-the-art of  the results and applications of such sets in approximation theory and the extension problem.

Notice that the estimate in the theorem has been proved for fat subanalytic sets in $\bR^n$ in \cite[Theorem~5.2]{N24}.  On the other hand, Vu \cite{Vu18} proved the theorem for $K$ being (real) smooth generic submanifolds, for example $\bS^n$.

An immediate consequence of Theorem~\ref{thm:main} in 
the classical setting of a non-pluripolar compact set $K\subset\bC^n$,  considering it as an affine coordinate chart of $\bP^n$, is as follows. It was pointed out in \cite{BBW11} (see also \cite[Section~6]{BB10} and \cite{Du20}) that this is a special case of $X = \bP^n$ with $L = \cO(1)$ the tautological line bundle and $\om =\om_{FS}$ the Fubini-Study metric. 
Now the space $PSH(\bP^n,\om_{FS})$ is corresponding to the Lelong class
\[\label{eq:L-class}\notag
	\cL = \left\{ f \in PSH(\bC^n):  f (z) - \rho(z) < c_f\right\},
\]
where $\rho = \frac{1}{2}\log (1+ |z|^2)$ the potential of  $
\om_{FS}$ in $\bC^n$. We consider the (classical) Siciak-Zaharjuta extremal function
$$
	L_K(z) = \sup\{v(z): v\in \cL, \; v\leq 0 \text{ on }K\}.
$$
Then, the equilibrium measure associated to $K$ is given by
\[	
	\mu_{\rm eq}(K) =\frac{(dd^c L_K^*)^n}{\int_{\bC^n} (dd^c L_K^*)^n}. 
\]
Let $\cP_d(\bC^n)$ be the set of complex valued polynomials of degree at most $d$. Then its dimension is $N_d = \binom{n+d}{n}$. Let $\{e_1,...,e_{N_d}\}$ be an ordered system of all monomials $z^{\al}:= z^{\al_1}_1 \cdots z_n^{\al_n}$ with $|\al| = \al_1 +\cdots \al_n \leq d$, where $\al_i \in \bN$. For each system $x^{(d)} = \{x_1, ..., x_{N_d}\}$ of $N_d$ points of $\bC^n$ we define the generalized Vandermonde-type determinant ${\rm VDM} (x^{(n)})$ by
$$
	{\rm VDM} (x^{(d)}):= \det [e_i(x_j)]_{i,j=1,...,N_d}.
$$
Then, a {\em Fekete configuration of order $d$} for $K$ is  a system $\xi^{(d)}=\{\xi_{1},....,\xi_{N_d}\}$ of $N_d$ points of $K$ that maximizes the function $|{\rm VDM}(x^{(d)})|$ on $K$, i.e.,
$$	
	\left| {\rm VDM}(\xi^{(d)}) \right| = \max \left\{ \left|{\rm VDM}(x^{(d)}) \right|: x^{(d)}\subset K\right\}.
$$
Given a Fekete configuration 
$\xi^{(d)}$ of $K$, we  consider the probability measure on $\bC^n$ defined by
$$
	\mu_d := \frac{1}{N_d} \sum_{j=1}^{N_d} \de_{\xi_j},
$$
where $\de_x$ denotes the Dirac measure concentrated at the point $x$. Then,
the weak convergence \eqref{eq:weak-c} reads
$$
	\lim_{d\to \infty} \mu_d = \mu_{\rm eq}(K).
$$
In this setting we have

\begin{cor}\label{cor:local} Let $K$ be a uniformly polynomially cuspidal set in either $\bR^n$ or $\bC^n$. Let $\ga, \al'$ be the constants in Theorem~\ref{thm:main} applied for $\phi\equiv 0$ and $\al=1$. Denote $\al''= \frac{\ga \al'}{24+12\al'}$.  Then, there exists a uniform constant $C = C (K,\ga)$ such that 
$$
	|\lc \mu_d - \mu_{\rm eq}(K), v \rc| \leq  \frac{ C \|v\|_{\ga} [\log d]^{3\al''}}  {d^{\al''}} \quad \forall d > 1,
$$
for every Fekete measure $\mu_d$ of order $d$ and every test function $v\in C^{\ga}(\bC^n)$.
\end{cor}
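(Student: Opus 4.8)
The plan is to deduce Corollary~\ref{cor:local} from Theorem~\ref{thm:main} by specializing to $X=\bP^n$, $L=\cO(1)$, $\om=\om_{FS}$, and then translating back and forth through the correspondence between the Lelong class \eqref{eq:L-class} and $PSH(\bP^n,\om_{FS})$ recalled just before the statement. I would organize the argument in three steps: transferring the UPC hypothesis to $\bP^n$, matching the pluripotential-theoretic and Fekete objects, and converting the distance estimate \eqref{eq:dist} into a pairing estimate against $C^\ga$ test functions.

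First I would check that $K$, viewed inside $\bP^n$ via the standard embedding $\bC^n\hookrightarrow\bP^n$ onto the affine chart $\{z_0\neq0\}$, is a compact UPC subset of $\bP^n$ in the sense of Definition~\ref{defn:UPC-mfd}. Since $K$ is compact in $\bC^n$, it is contained in this chart, on which the embedding is biholomorphic; the admissible polynomial arcs witnessing the UPC property of $K$ in $\bR^n$ or $\bC^n$ transport verbatim through the chart, so the manifold definition is satisfied. This step is immediate once Definition~\ref{defn:UPC-mfd} is unwound.

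Next comes the heart of the reduction. Writing $u\leftrightarrow v=u-\rho$ for the identification $\cL\cong PSH(\bP^n,\om_{FS})$, the classical unweighted problem on $K$ corresponds to the \emph{weighted} problem on $\bP^n$ with the smooth weight $\phi_0:=-\rho|_K$: indeed $v\leq\phi_0$ on $K$ is equivalent to $u\leq0$ on $K$, so $L_K=V_{K,\phi_0}+\rho$ and hence $L_K^*=V^*_{K,\phi_0}+\rho$. Applying $dd^c$ gives $dd^cL_K^*=\om_{FS}+dd^cV^*_{K,\phi_0}$; since both $\mu_{\rm eq}(K)$ and $\mu_{\rm eq}(K,\phi_0)$ are the normalizations of this same current to a probability measure, they coincide. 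Likewise, expressing the Fubini--Study norms on $H^0(\bP^n,\cO(d))$ in the affine trivialization as $|s_i(x_j)|=|e_i(x_j)|\,e^{-d\rho(x_j)}$, the weighted Fekete condition for $(K,\phi_0)$ of order $d$ becomes exactly the maximization of $|{\rm VDM}(x^{(d)})|$ on $K$; this is the model-case correspondence of \cite{BBW11, BB10}. Thus the measures $\mu_d$ and $\mu_{\rm eq}(K)$ of the corollary are literally the global $\mu_d$ and $\mu_{\rm eq}(K,\phi_0)$ of Theorem~\ref{thm:main}. As $\rho$ is smooth, $\phi_0$ is Lipschitz, i.e. $\al=1$-H\"older with norm controlled by $K$, which is the precise meaning of applying the theorem ``for $\phi\equiv0$ and $\al=1$''; consequently $c$, $\al'$ and hence $\al''$ depend only on $K$ and $\ga$.

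Finally I would apply Theorem~\ref{thm:main} to get ${\rm dist}_\ga(\mu_d,\mu_{\rm eq}(K))\leq c[\log d]^{3\al''}/d^{\al''}$ and convert this into the stated bound. Fix a neighborhood $U\Subset\bC^n$ of $K$ and a cutoff $\chi\in C_c^\infty(\bC^n)$ with $\chi\equiv1$ near $K$ and $\supp\chi\subset U$; for $v\in C^\ga(\bC^n)$ the function $\chi v$, extended by zero, lies in $C^\ga(\bP^n)$ with $\|\chi v\|_\ga\leq C(U,\chi)\|v\|_\ga$. Since $\mu_d$ and $\mu_{\rm eq}(K)$ are supported in $K$, where $\chi\equiv1$, we have $\lc\mu_d-\mu_{\rm eq}(K),v\rc=\lc\mu_d-\mu_{\rm eq}(K),\chi v\rc$. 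It remains to note that, although \eqref{eq:dist} uses smooth test functions, mollification yields $|\lc\mu-\nu,w\rc|\leq C\,{\rm dist}_\ga(\mu,\nu)\|w\|_\ga$ for every $w\in C^\ga(\bP^n)$: approximate $w$ by smooth $w_\eps$ with $\|w_\eps\|_\ga\leq C\|w\|_\ga$ and $w_\eps\to w$ uniformly, bound each $|\lc\mu-\nu,w_\eps\rc|$ by the distance, and pass to the limit using that $\mu,\nu$ are probability measures. Taking $w=\chi v$ gives the corollary with $C=C(K,\ga)$. Given Theorem~\ref{thm:main}, the only genuinely delicate point is the object-matching of the third-to-last paragraph --- in particular pinning down the correct global weight $\phi_0=-\rho|_K$ and verifying that the equilibrium measures and Fekete configurations actually coincide, rather than merely share the same weak limit; the UPC transfer and the cutoff/mollification are routine.
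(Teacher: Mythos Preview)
Your proposal is correct and follows exactly the approach the paper intends: the corollary is stated there as an ``immediate consequence'' of Theorem~\ref{thm:main} via the standard correspondence between $(\bP^n,\cO(1),\om_{FS})$ and the Lelong class (citing \cite{BBW11,BB10,Du20}), and your three steps simply make that correspondence explicit. Your observation that the unweighted local problem corresponds to the global weight $\phi_0=-\rho|_K$ (which is Lipschitz, hence $\al=1$) is a helpful clarification of the paper's shorthand ``$\phi\equiv 0$'', and the cutoff/mollification step you add to pass from ${\rm dist}_\ga$ to pairings against $C^\ga$ test functions fills in a detail the paper leaves to the reader.
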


Notice that in both Theorem~\ref{thm:main} and Corollary~\ref{cor:local} the speeds of convergence are far from being optimal.

\bigskip

{\em Acknowledgement.}  The last part of the paper has been completed while the second author was visiting at VIASM (Vietnam Institute for Advanced Study in Mathematics). He would like thank to the institute for the great hospitality and excellent working conditions. 

\section{Uniformly polynomially cuspidal sets and Fekete points}

\subsection{Uniformly polynomially cuspidal sets}
\label{sec:upc}  We are considering the compact sets with cusps in $\bK^n$, where here and throughout the note $$\bK \text{ is either }\bR \text{ or }\bC.$$ We also consider $\bR^n = \bR^n + i\cdot 0$ as a natural subset in  $\bC^n = \bR^{2n}$. Following \cite{PP86}  a compact subset $E \subset \bK^n$  is called {\em uniformly polynomially cuspidal} (UPC for short) if there exist positive constants $M$, $m$ and a positive integer $d$ such that for each $x\in E$, one may choose a polynomial map 
$$
	h_x: \bK \to \bK^n, \quad \deg h_x \leq d
$$
satisfying
$$\begin{aligned}
&	h_x(0) = x \quad \text{and}\quad h_x([0,1]) \subset E; \\
&	{\rm dist } (h_x(t), \bK^n \setminus E) \geq M t^m  \quad \text{for all } x\in E,\text{ and } t\in [0,1].
\end{aligned}$$

An important property of the UPC sets is that if $a \in E$, then 
\[ \label{eq:cusp}
	 E_a := \bigcup_{0\leq t \leq 1}  D(h_a(t), Mt^m) \subset E,
\]
where 
$D(p, r) = \{ x\in \bK^n: |x_1-p_1| \leq r, ..., |x_n-p_n| \leq r\}$ denotes the closed cube. Notice that $E_a$ is also a closed subset.  The UPC sets are fat, i.e., ${\rm int }E$ is dense in $E$. A very general family of  UPC sets is the one of fat subanalytic sets which was shown in \cite{PP86}.

It was also observed  by Paw\l ucki and Ple\'sniak   \cite[Proposition~1.2]{PP88} that the compact UPC sets are preserved under diffeomorphisms  in a neighborhood of $E$. In particular, the UPC sets in $\bC^n$ (i.e. $\bK =\bC$)  are well-defined on  complex manifolds of dimension $n$. We have a more general definition that include the case $\bK =\bR$.

\begin{defn}\label{defn:UPC-mfd} Let $E\subset X$ be a compact set. It is said to be uniformly polynomially cuspidal if there exists finitely many coordinate balls $\tau_i : \Om_i \to \bB(0,1)\subset \bC^n$ such that $E \subset \cup_i \Om_i$ and the sets $\tau(E \cap \Om_i) \cap \ov\bB(0,1/2)$ are  UPC in  $\bK^n$.
\end{defn}

Thus, this definition contains all UPC sets in $\bR^n$ considered as subset of $\bC^n\subset \bP^n$. Let us recall an example of UPC sets from \cite[Example~7.1]{PP86} which are not subanalytic. 

\begin{expl}\label{expl:not-subanalytic} Define $U = \{(x,y) \in \bR^2: 0<x\leq 1,\; 0<y<e^{-1/x}\}$ and take $E = [0,1]\times [-1,1] \setminus U$. Clearly, it is not subanalytic but  $E$ is UPC which satisfies even the property (P) introduced in \cite{Siciak85} (see also \cite[Definition~4.4]{N24}).  
\end{expl}

Another example \cite[Example~7.2]{PP86}, which is not subanalytic either, shows the very irregular behavior such as comb. 

\begin{expl}\label{expl:comb} Let $\{a_k\}$ and $\{\veps_k\}$ be strictly decreasing sequences of positive numbers both tending to zero such that 
$a_k - a_{k+1} > \veps_k + \veps_{k+1}$. Let $$E_k= \{(x,y)\in \bR^2: 0\leq x<y,|y-a_k|<\varepsilon_k\}$$ and define
$$
	E = [0,1] \times [-1,1] \setminus \bigcup_{k=2}^\infty E_k.
$$
Then, the comb $\bar E$ is not subanalytic. It does even not satisfy the property (P) as in Example~\ref{expl:not-subanalytic} for  $a_k =1/k$ and $\veps_k = e^{-1/k^2}/2$. However, $\bar E$ is UPC.
\end{expl}

\subsection{Convergence speed of Fekete measures for regular sets}
We will show that the UPC sets possessing a stronger local H\"older property.
Let us first recall the new regular property for compact sets introduced in \cite{DMN}. 

\begin{defn}\label{defn:dmn-reg}
A compact set $K$ is called {\em $(\Cc^\al, \Cc^{\al'})$-regular} with $0<\al,\al'\leq 1$ if for every $\al$-H\"older continuous function $\phi$, then $V_{K,\phi}$ is $\al'$-H\"older continuous whose $\al'$-H\"older norm depends only on $X, \al, \al'$ and $\|\phi\|_{\al}$.
\end{defn}

If $K = X$ the whole manifold then $V_{X,\phi} = P_X\phi$ is the envelope. As it is shown in \cite{B19} and \cite{To18} that the optimal regularity of $P_X\phi$ regularity is $C^{1,1}$  for a $C^2$-smooth weight. Therefore, in this case the range of $\al, \al'$ can be bigger than $1$. 

Our main focus is on a proper compact set  $K\subset \subset X$ where the best  regularity of $V_K$ is Lipschitz one \cite{Kl91}. So it is convenient to consider only the range $0< \al, \al' \leq 1$. If $K$ is $(\Cc^\al, \Cc^{\al'})$-regular, then clearly $V_K$ is $\al'$-H\"older continuous. Furthermore, if $K$ is $(\Cc^{\al}, \Cc^{\al'})$ regular, then it is $(\Cc^{\tau}, \Cc^{\al'})$ regular for every $\al \leq \tau \leq 1$ as $X$ is a compact manifold.

 Let us fix $0<\al, \al', \ga \leq 1$ and  a  $(\Cc^\al,\Cc^{\al'})$-regular compact set $K$. Let $\phi$ be a $\al$-H\"older continuous function on $X$. Then, \cite[Theorem~1.5]{DMN}  showed that  there is a uniform constant $C = C(K,\ga, \|\phi\|_\al)>0$ such that
\[\label{eq:speed-dmn}
	{\rm dist}_{\ga} (\mu_d, \mu_{\rm eq}(K,\phi)) \leq C \;  \frac{[\log d]^{3\al'}}{d^{\al' }} \quad \forall d>1,
\]
where $\al' = \ga \al / (24 +12\al)$.

Previously, the $(\Cc^\al,\Cc^\al)$-regularity with $0<\al<1$ was proved for every $C^2$-smooth bounded domain in $X$ in \cite{DMN} (see also \cite[Theorem~3.11]{MV22} for an improvement). Later, Vu \cite{Vu18} proved this property for smooth generic real manifolds in $X$. This family contains all totally real submanifolds as important examples.  This regularity is also useful for studying the large deviation principle for some beta ensembles  \cite{DN18} and estimating the Bergman functions \cite{MV22}.

Very recently, the H\"older regularity of the extremal function has been studied in \cite{N24}. It contains an effective criterion to check the regularity in Definition~\ref{defn:dmn-reg}. For example, it follows from by \cite[Corollary~4.10, Lemma~5.1]{N24} that all compact fat subanalytic sets in $\bR^n$ are $(\Cc^\al,\Cc^{\al'})$-regular. We will briefly recall the criterion here.  Recall that the Lelong class in $\bC^n$ is given by
\[\label{eq:L-class}\notag
	\cL = \left\{ f \in PSH(\bC^n):  f (z) - \rho(z) < c\right\},
\]
where $\rho = \frac{1}{2}\log (1+ |z|^2)$ and the constant $c$ may depend on $f$. For a  non-pluripolar compact subset $E$ in $\bC^n$,
$$
	L_E(z) := \sup \left\{f(z): f \in \cL,\;  f  \leq 0 \quad \text{on } E\right\}.
$$
It is a well-known fact that $E$ is non-pluripolar if and only if $L_E^*$ is locally bounded in $\bC^n$. Let us state a useful inequality of $L_E$ under holomorphic polynomials \cite[Lemma~4.11]{N24} that will be used in the sequel.
  
\begin{lem}\label{lem:L-polynomial} Let $E \subset \bC^k$ be a compact subset and $h: \bC^k \to \bC^n$ be a complex valued polynomial mapping of degree $d$. Then, for every $w\in \bC^k$,
$$
	L_{h(E)}(h(w)) \leq d \cdot L_{E} (w).
$$
\end{lem}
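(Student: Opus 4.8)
The plan is to prove this by a pullback-and-rescale argument: any admissible competitor for $L_{h(E)}$ at the point $h(w)$ can be composed with $h$ and divided by $d$ to yield an admissible competitor for $L_E$ at $w$, and the degree $d$ is exactly what accounts for the factor $d$ in the inequality.

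First I would fix an arbitrary $f\in\cL$ with $f\leq 0$ on $h(E)$ and set $g := \tfrac{1}{d}\,f\circ h$ on $\bC^k$. Since $f$ is plurisubharmonic and $h$ is holomorphic, $f\circ h$ is plurisubharmonic on $\bC^k$, and dividing by the positive number $d$ preserves this, so $g\in PSH(\bC^k)$. The main step is to verify that $g$ lies in the Lelong class $\cL$ on $\bC^k$. Because $f\in\cL$ there is a constant $c_f$ with $f(z)\leq \rho(z)+c_f$ for all $z\in\bC^n$, where $\rho(z)=\tfrac12\log(1+|z|^2)$. As $h$ is a polynomial map of degree $d$, there is $C>0$ with $|h(w)|\leq C(1+|w|)^d$, and hence $1+|h(w)|^2\leq C'(1+|w|^2)^d$ for a suitable $C'>0$. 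Taking $\tfrac12\log$ gives $\rho(h(w))\leq d\,\rho(w)+\tfrac12\log C'$, so that $f(h(w))\leq d\,\rho(w)+c''$ with $c''=\tfrac12\log C'+c_f$ independent of $w$. Dividing by $d$ yields $g(w)\leq \rho(w)+c''/d$, which shows $g\in\cL$.

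Finally, for $w\in E$ we have $h(w)\in h(E)$, so $f(h(w))\leq 0$ and therefore $g\leq 0$ on $E$. Thus $g$ is an admissible competitor in the supremum defining $L_E$, giving $g(w)\leq L_E(w)$, i.e. $f(h(w))\leq d\,L_E(w)$. Taking the supremum over all such $f$ produces $L_{h(E)}(h(w))\leq d\,L_E(w)$, as claimed.

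I do not expect a genuine obstacle beyond bookkeeping of constants; the one point that deserves care is that the additive constants $c_f$ and $c''$ are harmless precisely because the Lelong class is insensitive to additive constants, so they never survive into the final inequality once the supremum is taken. The only quantitative input is the polynomial growth bound $|h(w)|\leq C(1+|w|)^d$, and it is there that the exponent $d$ feeds directly into the multiplicative factor.
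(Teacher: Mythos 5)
Your argument is correct: the pullback $g=\tfrac{1}{d}f\circ h$ is plurisubharmonic, lies in the Lelong class by the growth bound $\rho(h(w))\leq d\,\rho(w)+O(1)$, and vanishes on $E$, so taking the supremum over admissible $f$ gives the claim. The paper itself offers no proof of this lemma --- it is quoted from \cite[Lemma~4.11]{N24} --- and your pullback-and-rescale argument is exactly the standard one used there and in the classical literature (cf.\ Klimek \cite{Kl91}), so there is nothing to correct.
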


Our main interest is to study higher regularity of the extremal function. 
The modulus of continuity of $L_E$ at $a\in E$ is given by  
$$
	\vpi_E'(a, \de) = \sup_{|z-a| \leq \de} L_E(z)
$$
for $0< \de \leq 1$. Then, $L_E$ is continuous at $a$ if and only if 
$\lim_{\de\to 0} \vpi'_E(a,\de) =0$. 
Put
$$
	\vpi_E'(\de) = \sup\{ \vpi_E'(a,\de): a\in E\}
$$
which is the modulus of continuity of $L_E$ over $E$. 
It is a well-known fact due to B\l ocki \cite[Proposition~3.5]{Siciak97} that the modulus of continuity of $L_E$ on $E$ controls the modulus of continuity of $L_E$ on $\bC^n$. Namely,
\[\label{eq:local-modulus-of-continuity}
	|L_E(z) - L_E(w)| \leq \vpi'_E(|z-w|), \quad z,w\in \bC^n, \; |z-w|\leq 1.
\]
Inspired from the definition of local $L$-regularity in \cite{Siciak81}  we introduced in \cite[Definition~3.2]{N24} the following notion  

\begin{defn}\label{defn:local-mu-hcp} Let $q\geq 0$ be an integer and  $0< \mu \leq 1$. Let $B(a,r)\subset \bC^n$ be a closed ball with center at $a$ and of radius $r>0$. We say that  a compact subset $K\subset \bC^n$  has
\begin{itemize}
\item
[(a)]  local $\mu$-H\"older continuity property (local {\rm $\mu$-HCP} for short) of order $q$ at $a\in K$ if there exist constants  $C>0$ and $0< r_0 \leq 1$  such that 
$$
	\vpi_{K\cap B(a,r)}' (a,\de) \leq \frac{C  \de^\mu}{r^q}, \quad 0<\de\leq 1,\, 0< r <r_0;
$$
\item
[(b)] local $\mu$-HCP of order $q$ if it has local $\mu$-HCP of order $q$ at every point $a\in K$ for the constants $C, r_0$ being  independent of $a$.
\end{itemize}
\end{defn}
Here it is important to know both the H\"older exponent and the H\"older coefficient. The  basic examples of local $\mu$-HCP sets are due to Siciak \cite{Siciak85} (see also \cite[Corollary~4.6]{N24}).
\begin{lem}[Siciak]
\label{lem:convex}
A convex compact subset in $\bK^n$, where $\bK = \bR$ or $\bK=\bC$, with non-void interior  has  local {\rm $\mu$-HCP} with the (optimal) exponent $\mu=1/2$ and of order $q= n$. 
\end{lem}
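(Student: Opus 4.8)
The plan is to bound the local modulus $\vpi'_{K\cap B(a,r)}(a,\de)$ from above by that of a normalized model, exploiting the rigidity that convexity imposes at \emph{every} point of $K$. Since $K$ is a convex body we may fix a ball $B(p_0,\rho_0)\subset\operatorname{int}K$, and for each $a\in K$ take the affine cusp map $h_a(t)=a+t(p_0-a)$ (degree $1$, so $m=1$ and $M=\rho_0$). Then the set $E_a$ of \eqref{eq:cusp} is a solid cone $E_a\subset K$ with apex $a$, and its half-angle at the apex is at least $\arcsin\!\big(\rho_0/\operatorname{diam}K\big)=:\te_0>0$, \emph{uniformly} in $a$. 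Because $E_a\cap B(a,r)\subset K\cap B(a,r)$, monotonicity of the extremal function under inclusion gives $L_{K\cap B(a,r)}\le L_{E_a\cap B(a,r)}$, so it suffices to estimate the apex modulus of the model cone $E_a\cap B(a,r)$, uniformly over the uniformly nondegenerate family $\{E_a\}_{a\in K}$.

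The one-dimensional input that forces the exponent $\mu=\tfrac12$ comes from Lemma~\ref{lem:L-polynomial} by unfolding the square root. Writing $[0,\ell]=\sigma([-\sqrt{\ell},\sqrt{\ell}])$ for $\sigma(w)=w^2$ (degree $2$), Lemma~\ref{lem:L-polynomial} yields $L_{[0,\ell]}(w^2)\le 2\,L_{[-\sqrt\ell,\sqrt\ell]}(w)$; since $w$ near $0$ is the \emph{midpoint} of $[-\sqrt\ell,\sqrt\ell]$, where $L$ is merely Lipschitz with $L_{[-\sqrt\ell,\sqrt\ell]}(w)\le C|w|/\sqrt\ell$, substituting $\ze=w^2$ gives the sharp endpoint bound
$$
	L_{[0,\ell]}(\ze)\ \le\ \frac{C\,|\ze|^{1/2}}{\ell^{1/2}}.
$$
I would then promote this to the model cone: a polynomial map of bounded degree (depending only on $n$) carries a fixed fat product set into $E_a$ and covers a full neighbourhood of the apex, and combining Lemma~\ref{lem:L-polynomial}, the interior–Lipschitz bound on the fat set, and Siciak's product formula $L_{\prod_j I_j}=\max_j L_{I_j}$ produces the apex modulus of $E_a$. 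Rescaling $E_a\cap B(a,r)$ to unit size converts the $r$-dependence into the factor $r^{-n}$, the power $n$ arising from the $n$-dimensional normalization of the construction; finally B\l ocki's inequality \eqref{eq:local-modulus-of-continuity} transfers the estimate from points of the model to all $z\in\bC^n$ with $|z-a|\le\de$, yielding local $\mu$-HCP with $\mu=\tfrac12$ of order $q=n$.

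The main obstacle is uniformity at degenerate boundary points. A convex body can have corners of arbitrarily small opening (e.g.\ the convex hull of two graphs meeting with zero angle), and there \emph{no} fat set with a vertex at $a$ is contained in $K\cap B(a,r)$; thus the naive inscribed-cube estimate collapses and one cannot read off the exponent from a single inscribed box. This is exactly where the cone $E_a$ to the \emph{fixed} interior ball is indispensable, restoring a uniform opening angle $\te_0$ and reducing everything to one normalized model. The remaining delicate point is extracting the sharp exponent $\tfrac12$ rather than a worse power near the apex (which lies at distance comparable to its size from every inscribed ball): the quadratic unfolding via Lemma~\ref{lem:L-polynomial} is what circumvents this, and checking that the degree and the fat product set in the apex-adapted map can be chosen uniformly over $\{E_a\}$ is the technical heart of the argument. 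Optimality of $\mu=\tfrac12$ is already visible on $[0,\ell]$, where the displayed bound is attained; the order $q=n$ is not claimed to be optimal.
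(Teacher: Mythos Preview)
The paper does not actually prove this lemma; it is stated as a result of Siciak with references to \cite{Siciak85} and \cite[Corollary~4.6]{N24}. So your sketch has to be assessed on its own, and it has a genuine gap at precisely the step you yourself call ``the technical heart.'' You assert that a polynomial map of bounded degree carries a fixed fat product set into the cone $E_a$ and covers a full neighbourhood of the apex, but the natural candidates fail. The componentwise square $p(w)=(w_1^2,\dots,w_n^2)$ sends $[-c,c]^n$ onto the cube $[0,c^2]^n$, which is \emph{not} contained in any proper cone with apex $0$ (for $n\ge 2$ the point $(0,c^2,0,\dots,0)$ lies outside). Maps of the type $(s,v)\mapsto(s^2,s^2v)$ do land in the cone, but they collapse $\{s=0\}$ to the apex, so for a generic $z$ near $a$ every preimage has $|v_j|=|z_j'|/|z_1|$ unbounded and your interior-Lipschitz bound on the product set is unavailable. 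Without a concrete map the promotion from the one-dimensional endpoint estimate to the $n$-dimensional apex is simply not established. One way to close the gap is to interpose a \emph{simplex}: after a uniform affine change, $E_a\cap B(a,r)$ contains the standard simplex $\Delta_\ell=\{x_j\ge 0,\ \sum_j x_j\le \ell\}$ with $\ell$ comparable to $r$, and $\Delta_\ell$ is exactly the image of the real ball $B_{\bR^n}(0,\sqrt{\ell})$ under the componentwise square. For $|z|\le\delta$ the preimage $w=(\sqrt{z_1},\dots,\sqrt{z_n})$ satisfies $|w|\le\sqrt{n}\,\delta^{1/2}$ and lies near the \emph{center} of the ball, where the product formula on an inscribed real cube yields the Lipschitz bound you invoke; Lemma~\ref{lem:L-polynomial} then gives the $\delta^{1/2}$ apex modulus.

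A second issue: your derivation of the order $q=n$ via ``rescaling $E_a\cap B(a,r)$ to unit size'' is not correct, because the extremal function is invariant under affine maps (degree one in Lemma~\ref{lem:L-polynomial}); rescaling only replaces $\delta$ by $\delta/r$, producing a factor $r^{-1/2}$, not $r^{-n}$. The simplex argument above in fact gives the stronger order $q=\tfrac12$, which of course implies $q=n$ since $r<1$, so the lemma's stated order remains valid---but it is not explained by an ``$n$-dimensional normalization'' as you write.
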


Notice that the proof of the characterization \cite[Theorem~1.2]{N24} showed that H\"older norm of $V_{K,\phi}$ depends only on $K, \om$ and the H\"older norm of $\phi$.
Thus, we have the sufficient condition for the $(\Cc^\al,\Cc^{\al'})$-regularity in \cite[Lemma~5.1]{N24}.

\begin{cor}\label{cor:regularity} Let $K \subset X$ be a non-pluripolar compact subset.  If $K$ has local H\"older continuous property of order $q$, then $K$ is ($\Cc^{\al}, \Cc^{\al'}$)-regular for some $0<\al, \al' \leq 1$. \end{cor}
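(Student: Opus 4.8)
The plan is to prove Corollary~\ref{cor:regularity} by reducing the global $(\Cc^\al,\Cc^{\al'})$-regularity on $X$ to the local H\"older continuity property (local HCP) of $K$, which is the hypothesis, and then invoke the already-cited characterization \cite[Theorem~1.2]{N24}. The key structural fact I will use is that local $\mu$-HCP of order $q$ is precisely a quantitative control on the modulus of continuity of the extremal function $L_{K\cap B(a,r)}$ near boundary points, and that by B\l ocki's inequality \eqref{eq:local-modulus-of-continuity} such local control automatically propagates to a global modulus of continuity of $L_E$ on all of $\bC^n$. The remark after Lemma~\ref{lem:convex} records exactly what we need, namely that the H\"older norm of $V_{K,\phi}$ furnished by \cite[Theorem~1.2]{N24} depends only on $K,\om$ and $\|\phi\|_\al$, which is the uniformity required by Definition~\ref{defn:dmn-reg}.

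\emph{First step.} I would pass from the manifold setting to the local model in $\bC^n$. Using a finite atlas of coordinate balls $\tau_i\colon \Om_i \to \bB(0,1)$ covering the compact set $K$, the $\om$-psh envelope $V_{K,\phi}$ on $X$ is comparable, on each chart, to a classical Lelong-class extremal function $L_{E_i}$ of the image $E_i = \tau_i(K\cap\Om_i)$ after subtracting the local potential $\rho$ of $\om$. Since $\phi$ is $\al$-H\"older and the transition maps are smooth, $\al$-H\"older continuity of the data and the output is preserved across charts up to uniform constants; thus it suffices to bound the modulus of continuity of $L_{E}$ for a compact $E\subset\bC^n$ enjoying local $\mu$-HCP of order $q$, with the weight absorbed into the problem.

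\emph{Second step.} This is where local HCP enters. For a fixed boundary point $a$ and radius $r$, the hypothesis gives $\vpi'_{K\cap B(a,r)}(a,\de)\le C\de^\mu/r^q$. The standard argument (as in Siciak's work \cite{Siciak81, Siciak85} and formalized in \cite{N24}) is to balance the two competing scales: choosing $r$ as a suitable power of $\de$ optimizes the bound $\de^\mu/r^q$ while keeping $r$ bounded below so that the local ball still captures enough of $K$. Optimizing over $r$ converts the local HCP estimate into a genuine global H\"older bound $\vpi'_E(\de)\le C'\de^{\al'}$ with an explicit exponent $\al'$ expressible through $\mu$ and $q$; then \eqref{eq:local-modulus-of-continuity} upgrades this to H\"older continuity of $L_E$ on all of $\bC^n$. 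Feeding this into \cite[Theorem~1.2]{N24} yields the $\al'$-H\"older continuity of $V_{K,\phi}$ with the stated uniform dependence, which is exactly $(\Cc^\al,\Cc^{\al'})$-regularity.

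\emph{Main obstacle.} The delicate point is not any single estimate but the uniformity and the bookkeeping of constants across the reduction: one must ensure that the exponent $\al'$ and the H\"older coefficient extracted in the second step depend only on $K,\om,\al$ and $\|\phi\|_\al$, and not on the base point $a$ or on the particular chart. This is where part (b) of Definition~\ref{defn:local-mu-hcp} (uniform constants $C,r_0$ independent of $a$) is essential, and where the finiteness of the atlas and the smoothness of transition maps must be used to keep all comparison constants under control. Once this uniform bookkeeping is in place, the corollary follows directly from the cited characterization, so I expect the argument to be short, with the real content borrowed from \cite[Theorem~1.2]{N24} and the scale-balancing computation being the only genuinely quantitative step.
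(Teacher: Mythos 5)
Your proposal is correct and follows essentially the same route as the paper, which proves this corollary simply by invoking \cite[Lemma~5.1]{N24} together with the observation that the proof of \cite[Theorem~1.2]{N24} yields a H\"older norm for $V_{K,\phi}$ depending only on $K$, $\om$ and $\|\phi\|_\al$. Your sketch merely expands the content of that cited result (localization to charts, scale-balancing of the local HCP bound, globalization via \eqref{eq:local-modulus-of-continuity}), and correctly identifies the uniformity of constants as the point that makes Definition~\ref{defn:dmn-reg} applicable.
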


It would be interesting to know whether  $(\Cc^\al, \Cc^{\al'})$-regularity implies the local HCP property. This is the case for compact sets satisfying additional geometric condition \cite[Theorem~1.2-(b)]{N24}.

\section{Proof of Theorem~\ref{thm:main}}

We need the following observation due to Pierzcha\l a \cite[Lemma~3.1]{Pi05} which helps to remove the extra assumption in \cite[Theorem~4.9]{N24} (see also  \cite[Remark~4.8-(b)]{N24}).

\begin{lem} \label{lem:coefficients} Let $E$ be a compact UPC set in $\bK^n$ and let $h_x(t) = \sum_{k=0}^d a_k(x) t^k$ be the polynomial satisfying the UPC condition, where $(x, t) \in E\times [0,1]$ and  $a_k: E \to \bK^n$. Then, $a_k(E)$ is a bounded set for each $k=0,...,d$.
\end{lem}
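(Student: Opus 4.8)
The plan is to note that the conclusion depends only on two features of the UPC data: the uniform degree bound $\deg h_x \le d$ and the containment $h_x([0,1]) \subset E$; the cusp estimate ${\rm dist}(h_x(t), \bK^n \setminus E) \ge M t^m$ plays no role whatsoever. Since $E$ is compact, hence bounded, I first fix $R>0$ with $|y|\le R$ for all $y\in E$. Then for every $x\in E$ and every $t\in[0,1]$ we have $h_x(t)\in E$, so $|h_x(t)|\le R$, and crucially this bound is \emph{uniform} in $x$.

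Next I would reduce to scalar one-variable polynomials. Writing $a_k(x)=(a_k^{(1)}(x),\dots,a_k^{(n)}(x))\in\bK^n$, set for each coordinate $j$ the scalar polynomial $p_{x,j}(t)=\sum_{k=0}^d a_k^{(j)}(x)\,t^k$. Although $h_x$ is a polynomial on all of $\bK$, I only use its restriction to the real segment $[0,1]$; thus each $p_{x,j}$ is a one-variable polynomial of degree at most $d$ in the real variable $t$, satisfying $|p_{x,j}(t)|\le R$ for all $t\in[0,1]$, uniformly in $x$ and $j$. (Note that real nodes suffice even when $\bK=\bC$, since the coefficients may be complex while $t$ stays real.)

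The core is the elementary fact that, on the finite-dimensional space of one-variable polynomials of degree at most $d$, each coefficient functional is continuous with respect to the sup-norm on $[0,1]$, with constant depending only on $d$. Concretely I would fix $d+1$ distinct nodes $0\le t_0<\dots<t_d\le 1$ (for instance $t_i=i/d$); the Vandermonde matrix $V=(t_i^{\,k})_{0\le i,k\le d}$ is invertible, and the interpolation identity gives, for every $x$ and $j$,
\[\notag
	\big(a_0^{(j)}(x),\dots,a_d^{(j)}(x)\big)^{T}
	= V^{-1}\big(p_{x,j}(t_0),\dots,p_{x,j}(t_d)\big)^{T}.
\]
Since $|p_{x,j}(t_i)|\le R$ for each $i$, this yields $|a_k^{(j)}(x)|\le \|V^{-1}\|_\infty\, R =: C_d\,R$, with $C_d$ depending only on $d$ and the chosen nodes. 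The estimate is uniform over $x\in E$, $j$ and $k$, so each $a_k(E)$ is bounded, which is the claim.

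The only point demanding care — and the heart of Pierzcha\l a's observation — is the uniformity in $x$. It is precisely the UPC requirement that a single integer $d$ bounds $\deg h_x$ for all $x\in E$ simultaneously that allows one fixed Vandermonde inverse $V^{-1}$, hence one constant $C_d$, to serve the whole family $\{h_x\}_{x\in E}$; were the degree permitted to grow with $x$, this constant would degenerate and the argument would collapse. Everything else is routine finite-dimensional linear algebra.
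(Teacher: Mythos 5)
Your proof is correct and follows essentially the same route as the paper: reduce to scalar coordinates, evaluate at $d+1$ fixed nodes in $[0,1]$ (the paper uses $t_j=1/j$, you use $t_i=i/d$), and invert the resulting Vandermonde system to bound the coefficients uniformly via the boundedness of $E$.
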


\begin{proof}  Let $0 \leq k\leq d$ and  $x \in E$. We write $a_k(x) = (a_k^1(x),....,a_k^n(x))$ and $h_x(t) = (h_x^1(t),...,h_x^n(t))$. It is enough to show that each $a_k^s(x)$, $s=1,...,n$ is uniformly bounded. Fix such an $s$, we have for $t_j = 1/j \in [0,1]$ and $j=1,....,d+1$,  the system of $(d+1)$ equations for a unknown vector $(a_0^s(x),...,a_d^s(x)) \in \bK^{d+1}$
$$
	\sum_{k=0}^d  \frac{a_k^s(x)}{j^k} = h_x^s(1/j) \in \pi_s(E), \quad j=1,....,d+1,
$$
where $\pi_s: \bK^n\to \bK$ the projection to the $s$-th coordinate.
Since $\pi_s(E)$ is bounded,  the unique solution $(v_0,...,v_d)$ of the equations is uniformly bounded.
\end{proof}

Thanks to Corollary~\ref{cor:regularity} and the estimate \eqref{eq:speed-dmn}, to prove Theorem~\ref{thm:main} it is enough to verify that a compact UPC set has local $\mu$-HCP of some order. To this end we follow the proof in \cite[Theorem~4.9]{N24}, which is based on the one of \cite{PP86}. For the sake of completeness we give detail proof here for the sets being either in $\bR^n$ or $\bC^{n} \equiv \bR^{2n}$.

\begin{thm}\label{thm:UPC} Let $E \subset X$ be a compact {\rm UPC} subset. Then, $E$  has local {\rm $\mu$-HCP} of some order $q$.
\end{thm}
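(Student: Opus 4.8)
The plan is to reduce the manifold statement to the model case in $\bK^n$ via Definition~\ref{defn:UPC-mfd} and then exploit the cusp structure \eqref{eq:cusp} together with the polynomial estimate of Lemma~\ref{lem:L-polynomial}. Since being UPC is a local notion defined through finitely many coordinate charts $\tau_i:\Om_i\to\bB(0,1)$, and since local $\mu$-HCP is preserved under biholomorphisms between bounded domains (the extremal function transforms controllably under polynomial—hence holomorphic—maps), I would first fix one chart and work with the UPC set $E\subset\bK^n$ directly. Thus the heart of the matter is to show: a compact UPC set $E\subset\bK^n$ has local $\mu$-HCP of some order $q$ at every point $a\in E$, with constants $C,r_0$ independent of $a$.

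\emph{First}, I would fix $a\in E$ and $0<r<r_0$, and estimate $\vpi'_{E\cap B(a,r)}(a,\de)=\sup_{|z-a|\le\de}L_{E\cap B(a,r)}(z)$. The strategy is to bound $L_{E\cap B(a,r)}$ from above by transplanting a one-variable extremal function along the cusp curve $h_a$. Concretely, by the UPC condition there is a polynomial map $h_a:\bK\to\bK^n$ of degree $\le d$ with $h_a(0)=a$, $h_a([0,1])\subset E$, and $\mathrm{dist}(h_a(t),\bK^n\setminus E)\ge Mt^m$; by \eqref{eq:cusp} the ``carrot'' $E_a=\bigcup_{0\le t\le 1}D(h_a(t),Mt^m)$ lies in $E$. \emph{Second}, I would intersect this carrot with the ball $B(a,r)$: for $t$ small the point $h_a(t)$ and a small cube around it stay inside $B(a,r)$, so $E_a$ contributes a genuine $n$-dimensional cuspidal region to $E\cap B(a,r)$. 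The cube $D(h_a(t),Mt^m)$ is convex, so Lemma~\ref{lem:convex} (Siciak) gives it local $1/2$-HCP of order $n$; this is the quantitative input that turns the geometry into a H\"older estimate for $L$.

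\emph{Third}, the key comparison: I would use the monotonicity $L_{E\cap B(a,r)}\le L_F$ for any subset $F\subset E\cap B(a,r)$, choosing $F$ to be a suitable union of the cubes $D(h_a(t),Mt^m)$ (equivalently the image $h_a$ of a segment, thickened), and then apply Lemma~\ref{lem:L-polynomial} with the polynomial map $h_a$ to pull the estimate back to the one-dimensional parameter $t$. Lemma~\ref{lem:L-polynomial} yields $L_{h_a(\cdot)}(h_a(w))\le (\deg h_a)\,L_{(\cdot)}(w)\le d\,L_{(\cdot)}(w)$, converting the multivariable extremal function along the image of the cusp into a one-variable extremal function on an interval, whose modulus of continuity is classical (H\"older of some explicit exponent). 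The powers of $t$ coming from the width $Mt^m$ and from the degree $d$ combine, after relating $\de$, $r$ and the relevant value of $t$, into a bound of the required shape $C\de^\mu/r^q$. Here Lemma~\ref{lem:coefficients} is exactly what guarantees the coefficients $a_k(x)$ of $h_x$ are uniformly bounded over $x\in E$, so the implied constants $C,r_0$ in the estimate do not depend on the base point $a$; this uniformity is what upgrades local $\mu$-HCP ``at $a$'' to local $\mu$-HCP on all of $E$ in the sense of Definition~\ref{defn:local-mu-hcp}(b).

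I expect the main obstacle to be the bookkeeping of exponents, namely extracting the precise dependence of $\mu$ and $q$ on $(M,m,d,n)$ while keeping all constants uniform in $a$. The delicate point is matching scales: for a given radius $r$ one must identify the largest $t=t(r)$ for which $h_a(t)$ and its cube $D(h_a(t),Mt^m)$ still fit inside $B(a,r)$ (this forces $t\sim r^{1/\min(1,m)}$ or so, since $|h_a(t)-a|\lesssim t$ while the cube half-width is $Mt^m$), and then to track how this choice propagates through Lemma~\ref{lem:L-polynomial} and the convex estimate of Lemma~\ref{lem:convex}, including the degradation of the H\"older exponent caused by the factor $d$ and by the $1/2$ from convexity. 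Once the model-case estimate in $\bK^n$ is established with uniform constants, transferring it back through the charts $\tau_i$ of Definition~\ref{defn:UPC-mfd} and invoking Corollary~\ref{cor:regularity} to conclude $(\Cc^\al,\Cc^{\al'})$-regularity (and hence, via \eqref{eq:speed-dmn}, Theorem~\ref{thm:main}) should be routine.
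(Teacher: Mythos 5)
Your overall skeleton matches the paper's (reduce to $\bK^n$ via the charts, work with the carrot $E_a$ from \eqref{eq:cusp}, feed Siciak's convexity lemma and the polynomial pull-back Lemma~\ref{lem:L-polynomial} into the estimate, and use Lemma~\ref{lem:coefficients} to make the constants independent of $a$). But your key step --- ``apply Lemma~\ref{lem:L-polynomial} with the polynomial map $h_a$ to pull the estimate back to the one-dimensional parameter $t$'' --- does not work, and it is precisely here that the paper does something different and essential. Two problems. First, Lemma~\ref{lem:L-polynomial} only bounds $L_{h(E)}$ \emph{at points of the image of $h$}; the point $z$ at which you must estimate $\vpi'_{E\cap B(a,r)}(a,\de)$ is an arbitrary point of $\bC^n$ with $|z-a|\le\de$, and for $n\ge 2$ such a $z$ will generically not lie on the curve $h_a(\bC)$, so the one-variable reduction never reaches it. Second, the candidate sets $F$ you propose are either the curve $h_a([0,1])$ --- which is pluripolar in $\bC^n$ for $n\ge2$, so $L_F\equiv+\infty$ off a pluripolar set and monotonicity gives nothing --- or the thickened union of cubes, which is not the image of a one-dimensional set under $h_a$, so Lemma~\ref{lem:L-polynomial} applied to $h_a$ says nothing about it. The fallback you also mention, applying Lemma~\ref{lem:convex} to a single cube $D(h_a(t),Mt^m)$, fails quantitatively: that cube has half-width $Mt^m$ but sits at distance of order $t$ from $a$, so its HCP estimate at $z$ near $a$ carries a factor $r^{-n}=(Mt^m)^{-n}$ that blows up faster than the numerator decays as $t\to0$; no choice of $t=t(\de)$ yields a bound of the form $C\de^\mu/r^q$.

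The missing idea is to enlarge the parameter space by $n$ extra variables: the paper introduces the polynomial map $p(t,z)=h_a(t)+M(z_1^m,\dots,z_n^m)$ from $\bC\times\bC^n$ to $\bC^n$ and the convex pyramid $S(r')=\{(t,tu):|t|\le r',\,|u_i|\le1\}\subset\bR^{n+1}$, so that $p(S(r'))\subset E_a\cap D(a,r)$ with $r'\sim r$ (this is where Lemma~\ref{lem:coefficients} enters). Siciak's Lemma~\ref{lem:convex} is applied to the $(n+1)$-dimensional pyramid, not to the cubes, giving $L_{S(r')}\le C\de^{1/2}/r'^{\,n+1}$ on a $\de$-neighborhood. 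The point of the extra variables is the inclusion $B(a,M\de^m)\subset p(\{0\}\times D(0,\de))$: by extracting $m$-th roots, \emph{every} $z$ with $|z-a|\le M\de^m$ is of the form $p(t,v)$ with $(t,v)$ in the $\de$-neighborhood of $S(r')$, so Lemma~\ref{lem:L-polynomial} (now for $p$, of degree $\max(d,m)$) applies at $z$ itself and yields $L_{E_a\cap D(a,r)}(z)\le C\de^{1/2}/r'^{\,n+1}$; rescaling $\de\mapsto M\de^m$ gives $\mu=1/(2m)$ and $q=n+1$. Without this $(n+1)$-variable parametrization your argument cannot be completed.
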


\begin{proof} Notice that the local $\mu$-HCP are invariant under biholomorphic maps. Therefore, by the definition of UPC sets (Definition~\ref{defn:UPC-mfd}) we can assume without loss of generality that $E$ is a compact UPC set in $\bK^n$, where $\bK$ is either $\bR$ or $\bC$. In what follows if $\bK = \bR$, then
the space $\bR^n$ is identified with the subset $\bR^n+i\cdot 0$ of  $\bC^n$. 

Let $a\in E$ be fixed and denote by $D(a,r)$ a closed polydisc. 
Observe first that the set $E_a$ defined in \eqref{eq:cusp} satisfies
$$	E_a = \{x\in \bK^n: x = h(t) + M t^m \left(u_1^m,...,u_n^m \right), t\in [0,1], |x_i| \leq 1, i=1,...,n\}.
$$
Let $S\subset \bR \times \bR^{n}$ be the pyramid 
$$
	S = \{(t, tu_1,...,tu_n) \in \bR \times \bR^{n}:  |t| \in [0, 1], |u_i| \leq 1, i=1,...,n\}.
$$
This is a convex set (with non-void interior in $\bR^{n+1}$) which implies that it has HCP by Lemma~\ref{lem:convex}. The crucial observation is that
our cusp $E_a$ contains the image of this set under the polynomial projection
$p(t,z): \bC \times \bC^{n} \to \bC^n$ given by
$$
	p(t,z) = h(t) + M (z_1^m, ..., z_n^m).
$$
Clearly, $p(S) \subset E_a$ and $p(0,0) = h(0) =a$.

To show the {\em local} $\mu$-HCP of some order at $a$  we need to shrink a bit that pyramid. We claim that for each $0< r \leq 1$, we can find $0< r' \leq r$ such that a smaller pyramid
$$
	S(r') := \{ (t,tu) \in \bR \times \bR^{n}: |t|\in [0,r'], |u_1| \leq 1, ..., |u_n| \leq 1\}
$$
satisfies
\[\label{eq:s-pyramid}
	p(S(r'))  \subset E_a \cap D(a,r).
\]
Indeed, for $(t,tv) \in S(r') \subset S$, the point $x = p(t, tv)= h(t) + M t^m\cdot v^m \in E_a$. Moreover,
$$\begin{aligned}
|x-a| &= |h(t) + M t^m\cdot v^m - h(0)| \\
& \leq  |h(t) - h(0)| + M t^m |v|^m \\
&\leq \left(\sum_{\ell=0}^d |h^{(\ell)}(0)| \right) r' +  M r', 
\end{aligned}
$$
where we used the fact that $m$ is a positive integer. Since  $h_x(t) = \sum_{\ell=0}^d b_\ell(x) t^\ell$,  we can choose
\[ \label{eq:order-UPC}
	r' = \frac{r}{1+ d!\sum_{\ell=0}^d \|h^{(\ell)}(0)\| + M} \quad\text{where }
	\|h^{(\ell)}(0)\| := \sup_{x\in E} |b_\ell(x)|.
\]
Thanks to Lemma~\ref{lem:coefficients} the uniform bound $d!\sum_{\ell=0}^d \|h^{(\ell)}(0)\|$ for the sum $\sum_{\ell=0}^d |h^{(\ell)}(0)|$ does not depend on the point $a$. 

Since $S(r')$ contains a ball of radius $ \tau_n r'$ in $\bR^{n+1}$ with a numerical constant $\tau_n$, it follows from Lemma~\ref{lem:convex} that $S(r')$ has local $\frac{1}{2}$-HCP of order $q=n+1$, i.e.,
\[\label{eq:HCP-convex}
	L_{S(r')} (t,v) \leq \frac{C \de^\frac{1}{2}}{r'^{n+1}}
\]
for every $(t,v) \in S_\de (r') := \{ \ze \in \bC^{n+1}: {\rm dist} (\ze, S(r')) \leq \de\}$ and $C$ does not depend on $r'$ and $\de$.

Moreover, for all $0< \de \leq  r'$, we have   $$P(\de) = \{(t,z) \in \bC\times \bC^n: |t|\leq \de, |z_i| \leq \de, i=1,...,n\} \subset S_\de(r').$$ 
Then, for such a small $\de$, the following inclusions hold
\[\label{eq:inclusion-chain}
	B (a, M \de^m) \subset p (\{0\}\times D(0,\de))\subset p(P(\de)) \subset p ( S_\de (r')).
\]

Now we are ready to conclude the local $\mu$-HCP of $ E_a$. Let $z\in \bC^n$ be such that 
\(z\in B(a,M\delta^m)\).
By \eqref{eq:inclusion-chain} we have $z = p(t,v) \in \bC^n$ for some $(t,v)\in S_\de (r')$. Furthermore, by \eqref{eq:s-pyramid} we have $F:= p(S(r')) \subset E_a \cap D(a,r)$. Combining these facts  with \eqref{eq:HCP-convex} we obtain
$$\begin{aligned}
L_{ E_a\cap D(a,r)}(z) &\leq L_{F} (z) \\
&= L_{F} (p(t,v)) \\
&\leq \max(d,m) \cdot L_{S(r')} (t,v)  \\
& \leq \frac{C \de^\frac{1}{2}}{r'^{n+1}},
\end{aligned}$$
where for the third inequality we used Lemma~\ref{lem:L-polynomial} and the last constant $C$ does not depend on $r'$ and $a$.
Rescaling $\de := M\de^m \leq r'$, we obtain
$$
	L_{E_a \cap D(a,r)} (z) \leq \frac{C \de^\frac{1}{2m}}{r'^{n+1}}
$$
for every ${\rm dist}(z,a) \leq \de$, where $0<\de \leq r'$. Notice that $r$ and $r'$ are comparable by \eqref{eq:order-UPC}. Hence, $E_a$ has local $\mu$-HCP at $a$ with the exponent $\mu = 1/2m$ and of order $q=n+1$ and so does $E\supset E_a$. This finishes the proof of the theorem.
\end{proof}

\begin{remark}\label{rem:al'} Let $\mu$ and $q$ be the constants in the proof of Theorem~\ref{thm:UPC} for the compact UPC set $K$. Then, it follows from \cite[Theorem~1.2]{N24} that we can choose  $\al'$ in Theorem~\ref{thm:main} as
$$\al' =  \frac{\tau^2}{\tau+2+q}\quad \text{where } \tau = \min\left\{\al,\frac{\mu}{1+q}\right\}.$$  

\end{remark}


\begin{thebibliography}{000000000}


\bibitem[BT82]{BT82} E. Bedford and B. A. Taylor, 
{\it A new capacity for plurisubharmonic functions.} Acta Math. {\bf149} (1982), 1--40.


\bibitem[B19]{B19}
R.~J. Berman, {\it From Monge-Amp\`ere equations to envelopes and geodesic rays in the zero temperature limit}, Math. Z. {\bf 291} (2019), no.~1-2, 365--394.

\bibitem[BB10]{BB10}
R.~J. Berman\ and\ S. Boucksom, {\it Growth of balls of holomorphic sections and energy at equilibrium}, Invent. Math. {\bf 181} (2010), no.~2, 337--394. 

\bibitem[BBW11]{BBW11}
R.~J. Berman, S. Boucksom\ and\ D. Witt~Nystr\"{o}m, {\it Fekete points and convergence towards equilibrium measures on complex manifolds}, Acta Math. {\bf 207} (2011), no.~1, 1--27.




\bibitem[BM88]{BM88} E. Bierstone\ and\ P.~D. Milman, {\it Semianalytic and subanalytic sets}, Inst. Hautes \'{E}tudes Sci. Publ. Math. No. 67 (1988), 5--42. 

\bibitem[BSV89]{BSV89}
H.-P. Blatt, E.~B. Saff\ and\ V. Totik, {\it The distribution of extreme points in best complex polynomial approximation,} Constr. Approx. {\bf 5} (1989), no.~3, 357--370.

\bibitem[BBL92]{BBL92}
Bloom\ et al., {\it Polynomial interpolation of holomorphic functions in ${\bf C}$ and ${\bf C}^n$}, Rocky Mountain J. Math. {\bf 22} (1992), no.~2, 441--470.



\bibitem[DMN17]{DMN}
T.-C. Dinh, X. Ma\ and\ V. Nguy\^{e}n, {\it Equidistribution speed for Fekete points associated with an ample line bundle}, Ann. Sci. \'{E}c. Norm. Sup\'{e}r. (4) {\bf 50} (2017), no.~3, 545--578. 

\bibitem[DN18]{DN18}
T.-C. Dinh, and\ V. Nguy\^{e}n, {\it Large deviation principle for some beta ensembles}, Trans. Amer. Math. Soc. {\bf 370} (2018), no. 9, 6565--6584.

\bibitem[DS06]{DS06}
T-C. Dinh, N. Sibony, {\it Distribution des valeurs de transformations méromorphes et applications.} Comment. Math. Helv. {\bf 81} (2006), no. 1, pp. 221--258.

\bibitem[Du20]{Du20}
R. Dujardin, {\it Th\'{e}orie globale du pluripotentiel, \'{e}quidistribution et
   processus ponctuels [d'apr\`es Berman, Boucksom, Witt Nystr\"{o}m,
   $\ldots$] (French)}, Ast\'{e}risque, {\bf 422} (2020), Exp. No. 1152, 61--107.


\bibitem[GZ17]{GZ17}
V. Guedj\ and\ A. Zeriahi, {\it Degenerate complex Monge-Amp\`ere equations}, EMS Tracts in Mathematics, 26, Eur. Math. Soc., Z\"{u}rich, 2017. 


\bibitem[Kl91]{Kl91}
M. Klimek, {\it Pluripotential theory}, London Mathematical Society Monographs. New Series, 6, The Clarendon Press, Oxford University Press, New York, 1991.

\bibitem[LO16]{LO16}
N. Lev\ and\ J. Ortega-Cerd\`a, {\it Equidistribution estimates for Fekete points on complex manifolds}, J. Eur. Math. Soc. (JEMS) {\bf 18} (2016), no.~2, 425--464. 


\bibitem[Le10]{Le10}
N. Levenberg, {\it Weighted pluripotential theory results of Berman-Boucksom}, preprint {\bf arXiv}:1010.4035.


\bibitem[MV22]{MV22} G. Marinescu and D-V. Vu, {\it
Bergman kernel functions associated to measures supported on totally real submanifolds,} Journal für die reine und angewandte Mathematik (Crelles Journal) 2024, no. 810 (2024): 217-251.


\bibitem[Ng24]{N24}
N.-C. Nguyen, {\it Regularity of the Siciak-Zaharjuta extremal function on compact K\"hler manifolds,} preprint, arXiv:2305.04171. To appear in Trans. AMS.

\bibitem[PP86]{PP86}
W. Paw\l ucki\ and\ W. Ple\'{s}niak, {\it Markov's inequality and $C^\infty$ functions on sets with polynomial cusps}, Math. Ann. {\bf 275} (1986), no.~3, 467--480.

\bibitem[PP88]{PP88} W. Paw\l ucki\ and\ W. Ple\'{s}niak, 
{\it Extension of $\bC^\infty$ functions from sets with polynomial cusps,} Studia Math. {\bf 88} (1988) 279--287.

\bibitem[Pi05]{Pi05} R. Pierzcha\l a, 
{\it UPC condition in polynomially bounded o-minimal structures.} J. Approx. Theory. {\bf 132} (2005), no 1, 25-33.


\bibitem[Pl06]{Pl06} W. Ple\'{s}niak,
{\it Multivariate polynomial inequalities via pluripotential theory
              and subanalytic geometry methods}, Approximation and probability, Banach Center Publ. {\bf 72}, 251--261, Polish Acad. Sci. Inst. Math., Warsaw, 2006.



\bibitem[ST97]{ST97}
E.~B. Saff\ and\ V. Totik, {\it Logarithmic potentials with external fields}, Grundlehren der mathematischen Wissenschaften, 316, Springer, Berlin, 1997.


\bibitem[Si62]{Si62}
J. Siciak, {\it On some extremal functions and their applications in the theory of analytic functions of several complex variables,} Trans. Amer. Math. Soc. {\bf 105} (1962), 322--357.


\bibitem[Si81]{Siciak81}
J. Siciak, {\it Extremal plurisubharmonic functions in ${\bf C}\sp{n}$}, Ann. Polon. Math. {\bf 39} (1981), 175--211.

\bibitem[Si82]{Siciak82} J. Siciak, {\it Extremal plurisubharmonic functions and capacities in $\bC^n$}, Sophia Kokyuroku in Math. 14, Sophia Univ., Tokyo. 1982

\bibitem[Si85]{Siciak85}
J. Siciak, {\it Highly noncontinuable functions on polynomially convex sets}, Univ. Iagel. Acta Math. No. 25 (1985), 95--107. 

\bibitem[Si97]{Siciak97}
J. Siciak, {\it Wiener's type sufficient conditions in $\bold C^N$}, Univ. Iagel. Acta Math. No. 35 (1997), 47--74.

\bibitem[To18]{To18}
V. Tosatti, {\it Regularity of envelopes in K\"{a}hler classes}, Math. Res. Lett. {\bf 25} (2018), no.~1, 281--289.

\bibitem[Vu18]{Vu18}
D.-V. Vu, {\it Equidistribution rate for Fekete points on some real manifolds}, Amer. J. Math. {\bf 140} (2018), no.~5, 1311--1355.


\bibitem[Za76]{Za76}
V. Zaharjuta, {\it Extremal plurisubharmonic functions, orthogonal polynomials, and the Bern\v{s}te\u{\i}n-Walsh theorem for functions of several complex variables,} Ann. Polon. Math. {\bf 33} (1976/77), no.~1-2, 137--148.


\end{thebibliography}
\end{document}